\numberwithin{equation}{section}
\newtheorem{theo}{Theorem}
\newtheorem{coro}{Corollary}
\newtheorem{prop}{Proposition}
\theoremstyle{remark}
\newtheorem{Remark}{Remark}
\newtheorem*{Remark*}{Remark}
\newtheorem*{Remarks*}{Remarks}
\newcommand*{\house}[1]{%
 \mathord{%
 \mathpalette\@house{#1}%
 }%
}
\newcommand*{\@house}[2]{%
 \dimen@=\fontdimen8 %
 \ifx#1\scriptscriptstyle\scriptscriptfont
 \else\ifx#1\scriptstyle\scriptfont
 \else\textfont\fi\fi
 3 %
 \sbox0{%
 $#1%
 \vrule width\dimen@\relax
 \overline{%
 \kern2\dimen@
 \begingroup 
 #2%
 \endgroup
 \kern2\dimen@
 }%
 \vrule width\dimen@\relax
 \mathsurround=1.5\dimen@ 
 $%
 }%
 \ht0=\dimexpr\ht0-\dimen@\relax
 \dp0=\dimexpr\dp0+2\dimen@\relax
 \vbox{%
 \kern\dimen@ 
 \copy0 %
 }%
}
\newcommand{\housesurQ}[1]{|#1|}
\newcommand{\tra}{{}^t}
\newcommand{\N}{\mathbb{N}}
\newcommand{\Z}{\mathbb{Z}}
\newcommand{\Q}{\mathbb{Q}}
\newcommand{\R}{\mathbb{R}}
\newcommand{\Qbar}{\overline{\mathbb Q}}
\newcommand{\etoile}{^*}
\newcommand{\ord}{{\rm ord}}
\begin{document}

\title{Transcendence of values of logarithms of $E$-functions}
\date\today
\author{S. Fischler and T. Rivoal}
\maketitle

\begin{abstract} Let $f$ be an $E$-function (in Siegel's sense) not of the form $e^{\beta z}$, $\beta \in \Qbar$, and let $\log$ denote any fixed determination of the complex logarithm. We first prove that there exists a finite set $S(f)$ such that for all $\xi\in \Qbar\setminus S(f)$, $\log(f(\xi))$ is a transcendental number. We then quantify this result when $f$ is an $E$-function in the strict sense with rational coefficients, by proving an irrationality measure of $\ln(f(\xi))$ when $\xi\in \mathbb Q\setminus S(f)$ and $f(\xi)>0$. This measure implies that $\ln(f(\xi))$ is not an ultra-Liouville number, as defined by Marques and Moreira. The proof of our first result, which is in fact more general, uses in particular a recent theorem of Delaygue. The proof of the second result,  which is independent of the first one, is a consequence of a new linear independence measure for values of linearly independent $E$-functions in the strict sense with rational coefficients, where emphasis is put on other parameters than on the height, contrary to the case in Shidlovskii's classical measure for instance.
\end{abstract}

\section{Introduction}

In this paper, we pursue our study of the properties of the values of $E$-functions at algebraic points, and more specifically of the logarithms of these values. We recall the definition of $E$-functions. As usual, we embed $\Qbar$ in $\mathbb C$. 
A power series $F(z)=\sum_{n=0}^{\infty} a_n z^n/n! \in \Qbar[[z]]$ is said to be a {\em strict} $E$-function~if 

$(i)$ $F(z)$ is solution of a non-zero linear differential equation with coefficients in $\Qbar(z)$.
\smallskip

$(ii)$ There exists $C_1>0$ such that all Galois conjugates of $a_n$ have modulus $\le C_1^{n+1}$, for all $n\ge 0$.

\smallskip

$(iii)$ There exists $C_2>0$ and a sequence of positive integers $d_n$, with $d_n \leq C_2^{n+1}$, such that $d_na_m$ are algebraic integers for all~$m\le n$.

\noindent In fact, $E$-functions have been defined by Siegel~\cite{siegel} in 1929 in a more general way, ie the two bounds $(\cdots)\le C_i^{n+1}$ are replaced by: for all $\varepsilon>0$, $(\cdots)\le n!^{\varepsilon}$ for all $n\ge N(\varepsilon)$. It is believed that an $E$-function in Siegel's sense is automatically a strict $E$-function; see \cite[p.~715]{andre} for a discussion. 
Unless otherwise specified, $E$-functions below will be understood in Siegel's sense. Note that if $a_n\in \mathbb Q$, $(ii)$ and $(iii)$ read $\vert a_n\vert \le C^{n+1}$ and $d_na_m\in \mathbb Z$; in $(i)$, there exists such a differential equation with coefficients in $\mathbb Q(z)$, and the normalized one of minimal order also has coefficients in $\mathbb Q(z)$. 
An $E$-function is either a polynomial or a transcendental function.

\medskip

 Let $f$ and $g$ be two $E$-functions. If $f$ is transcendental and $g$ is a polynomial, then $f(\Qbar)\cap g(\Qbar)$ is finite by \cite{ar}. If $f$ and $g$ are polynomials, two cases occur: if one of them is a constant, $f(\Qbar)\cap g(\Qbar)$ is finite, while if none is a constant, $f(\Qbar)\cap g(\Qbar)$ is infinite. Our first result completes the picture; it shows that a transcendental $E$-function is determined by the set of values it takes at algebraic numbers.
\begin{theo} \label{theo:1} 
Let $f,g$ be two transcendental $E$-functions such that $f(z)$ is not of the form $g(\beta z)$, $\beta\in \Qbar$. Then 
 $\{(\xi,\eta)\in \Qbar^2 : f(\xi)=g(\eta)\}$ is a finite set.
Equivalently, 
$f(\Qbar)\cap g(\Qbar)$ is a finite set.
\end{theo}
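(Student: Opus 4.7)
The natural strategy is proof by contradiction: suppose the set $S := \{(\xi,\eta) \in \Qbar^2 : f(\xi)=g(\eta)\}$ is infinite. First, the pairs with $\xi=0$ or $\eta=0$ contribute only finitely many elements to $S$: those with $\xi=0$ all lie over the algebraic value $f(0)$, and the set $\{\eta \in \Qbar : g(\eta) = f(0)\}$ is finite by the theorem of Adamczewski--Rivoal \cite{ar} applied to the transcendental $E$-function $g(z) - f(0)$; symmetrically for $\eta=0$. Discarding them, we obtain an infinite sequence of pairs $(\xi_i,\eta_i) \in (\Qbar^*)^2$ with the $\xi_i$ pairwise distinct and avoiding the finitely many singularities of the linear differential equations satisfied by $f$ and $g$. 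Setting $\lambda_i := \eta_i / \xi_i \in \Qbar^*$, we split into cases according to whether the set $\{\lambda_i\}$ is finite or infinite.

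If $\{\lambda_i\}$ is finite, pigeonhole produces $\lambda \in \Qbar^*$ occurring for infinitely many $i$, so the $E$-function $h(z) := f(z) - g(\lambda z)$ vanishes at infinitely many algebraic points. Since $0\in\Qbar$, a transcendental $h$ would contradict \cite{ar}, and a nonzero polynomial has only finitely many zeros; hence $h\equiv 0$, i.e.\ $f(z)=g(\lambda z)$, contradicting the hypothesis. If $\{\lambda_i\}$ is infinite, then for each $i$ the identity $f(\xi_i) - g(\lambda_i \xi_i) = 0$ is a non-trivial $\Qbar$-linear relation among $1$, $f(\xi_i)$ and $g(\lambda_i \xi_i)$. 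Applying the Beukers refinement of the Siegel--Shidlovskii theorem to the $E$-functions $1, f(z), g(\lambda_i z)$ at the regular algebraic point $\xi_i$ lifts this to a $\Qbar(z)$-linear relation among these three functions. Transcendence of $f$ and $g$ prevents the coefficients of $f(z)$ and $g(\lambda_i z)$ from vanishing, producing
\[
g(\lambda_i z) = A_i(z)\, f(z) + B_i(z),\qquad A_i, B_i \in \Qbar(z),\ A_i \not\equiv 0,
\]
for infinitely many distinct $\lambda_i \in \Qbar^*$.

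The main obstacle, and the step where a genuinely new input is needed, is to extract a contradiction from this infinite family of rational-function relations. Eliminating $f$ between any two of them yields, for infinitely many $\mu \in \Qbar^* \setminus \{1\}$, a functional equation
\[
g(\mu z) = C_\mu(z)\, g(z) + D_\mu(z),\qquad C_\mu, D_\mu \in \Qbar(z),\ C_\mu \not\equiv 0.
\]
Such an infinite family of ``quasi-dilation'' identities is expected to be incompatible with $g$ being a transcendental $E$-function, and this is precisely the type of rigidity statement supplied by the recent theorem of Delaygue alluded to in the abstract, which controls how an $E$-function can be related to its own rescalings by rational-function factors. Invoking it forces $g$ into a form already excluded by the hypothesis, thereby completing the proof by contradiction.
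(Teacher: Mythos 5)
Your decomposition into the cases ``the set of ratios $\{\eta_i/\xi_i\}$ is finite'' and ``infinite'' is natural, and your treatment of the first case (pigeonhole, then $f(z)-g(\lambda z)$ vanishing at infinitely many algebraic points forces it to be identically zero, contradicting the hypothesis) is exactly the argument the paper uses at the corresponding step. The problem is that the second case is the entire content of the theorem, and your argument there has a genuine gap. The key external input, Delaygue's theorem \cite{Delaygue}, is not a rigidity statement about ``quasi-dilation'' identities $g(\mu z)=C_\mu(z)g(z)+D_\mu(z)$: it says directly that if $f(\xi)=g(\eta)$ is \emph{transcendental} with $\xi,\eta\neq 0$, then $\eta/\xi$ is of the form $\varrho_1/\varrho_2$ with $\varrho_1,\varrho_2$ ranging over the finite sets of finite singularities of the $G$-functions associated with $f$ and $g$. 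Combined with \cite{ar} (which disposes of the finitely many pairs where the common value is algebraic), this shows your second case simply cannot occur, and no elimination argument is needed. Your proposal instead reduces the second case to an unproven assertion --- that a transcendental $E$-function cannot satisfy infinitely many relations $g(\mu z)=C_\mu(z)g(z)+D_\mu(z)$ --- and attributes it to Delaygue on faith; that assertion is of comparable depth to the theorem being proved and is not what \cite{Delaygue} establishes, so the proof is incomplete at its crucial point.

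Two secondary issues. First, Beukers' refined theorem \cite{beukers} applies to a full solution vector of a first-order system, so the $\Qbar(z)$-linear relation it produces from $f(\xi_i)-g(\lambda_i\xi_i)=0$ will in general involve $f',f'',\ldots$ and the derivatives of $g(\lambda_i z)$ (their coefficients vanish at $\xi_i$ but need not vanish identically); the clean two-term form $g(\lambda_i z)=A_i(z)f(z)+B_i(z)$ is therefore not automatic, and the subsequent elimination of $f$ is more delicate than stated. Second, you do not address the stated equivalence with the finiteness of $f(\Qbar)\cap g(\Qbar)$: the nontrivial direction requires knowing that each fiber $\{\alpha\in\Qbar: f(\alpha)=\chi\}$ is finite for every $\chi\in\C$, which the paper proves as a separate proposition (again from \cite{ar} when $\chi$ is algebraic and from Delaygue's theorem when $\chi$ is transcendental).
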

The equivalence is a consequence of Proposition~\ref{prop:1} proved in~\S\ref{sec:1}. 
The assumption in Theorem~\ref{theo:1} is obviously also necessary to obtain finiteness when $f$ and $g$ are transcendental. 

Applying Theorem~\ref{theo:1} to $f(z)$ transcendental and $g(z)=e^{z}$, we deduce that the set $\{(\xi, \eta)\in \Qbar^2 : f(\xi)=e^{\eta}\}$ is finite when $f(z)\neq e^{\beta z}$ for all $\beta \in \Qbar$. As $\{(\xi, \eta)\in \Qbar^2 : f(\xi)=e^{\eta}\}$ is also finite if $f\in \Qbar[z]$ by the Hermite-Lindemann Theorem, we obtain the following result. 
\begin{coro} \label{coro:2} Let $f$ be an $E$-function not of the form $e^{\beta z}$, $\beta \in \Qbar$, and let $\log$ denote any fixed determination of the complex logarithm. There exists a finite set $S(f)$ such that for all $\xi\in \Qbar\setminus S(f)$, $\log(f(\xi))$ is a transcendental number. 
\end{coro}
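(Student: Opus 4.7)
The plan is to derive Corollary~\ref{coro:2} as essentially a rewording of Theorem~\ref{theo:1} applied with $g(z)=e^z$, together with the Hermite-Lindemann theorem to cover the case where $f$ is a polynomial rather than transcendental. Concretely, I would take as candidate exceptional set
\[
S(f) \;=\; \bigl\{\xi \in \Qbar : f(\xi) = 0\bigr\} \;\cup\; \bigl\{\xi \in \Qbar : f(\xi) = e^\eta \text{ for some } \eta \in \Qbar\bigr\},
\]
the point being that for $\xi \notin S(f)$, $f(\xi)$ is non-zero so $\log(f(\xi))$ is well-defined, and if it were algebraic then setting $\eta := \log(f(\xi))$ would place $\xi$ in the second piece, a contradiction. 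The work therefore reduces entirely to proving that both pieces of $S(f)$ are finite.

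For the second piece, I would split according to whether $f$ is transcendental or a polynomial. If $f$ is transcendental, then $g(z)=e^z$ is itself a transcendental $E$-function, and the hypothesis ``$f(z)\neq e^{\beta z}$ for every $\beta\in\Qbar$'' is literally the hypothesis $f(z)\neq g(\beta z)$ of Theorem~\ref{theo:1}; that theorem then yields finiteness of $\{(\xi,\eta)\in\Qbar^2 : f(\xi)=e^\eta\}$, whose projection on the first coordinate is exactly the second piece of $S(f)$. If instead $f\in\Qbar[z]$, the hypothesis excludes $f\equiv 1$, and any algebraic solution of $f(\xi)=e^\eta$ with $\eta\in\Qbar$ satisfies $e^\eta\in\Qbar$, forcing $\eta=0$ by Hermite-Lindemann; so $f(\xi)=1$, an equation with only finitely many algebraic solutions (none if $f$ is a constant different from $1$).

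The first piece, $\{\xi\in\Qbar : f(\xi)=0\}$, is bounded by $\deg f$ when $f$ is a nonzero polynomial, and in the transcendental case is finite by Proposition~\ref{prop:1}, which packages the theorem of \cite{ar} into finiteness of each algebraic fibre of $f$. No genuinely new obstacle appears at the corollary level: once Theorem~\ref{theo:1} is granted, Corollary~\ref{coro:2} is essentially a dictionary translation from ``common value of two $E$-functions'' to ``transcendence of a logarithm'', with Hermite-Lindemann inserted only to handle the polynomial branch and Proposition~\ref{prop:1}/\cite{ar} only to discard the zeros of $f$.
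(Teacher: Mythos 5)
Your proposal is correct and follows essentially the same route as the paper: apply Theorem~\ref{theo:1} with $g(z)=e^z$ in the transcendental case, invoke Hermite--Lindemann in the polynomial case, and take $S(f)$ to be the (finite) projection of $\{(\xi,\eta)\in\Qbar^2 : f(\xi)=e^\eta\}$ together with the finitely many algebraic zeros of $f$. The only difference is that you spell out the zero set and the fibre-finiteness via Proposition~\ref{prop:1} explicitly, which the paper leaves implicit.
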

As the proof shows, given $\xi\in\Qbar$, there exists an algebraic determination of the logarithm of $f(\xi)$ if, and only if, either $f(\xi)=1$ or the $E$-function $f(z)-\exp( z/ \varrho )$ vanishes at $\xi$ for some $\varrho$ in the finite set $ \mathfrak S(f) $ considered in \S \ref{subsecLW}, which determines $S(f)$.

This corollary applies to any $E$-function with a minimal differential equation of order~$\ge 2$, for example to Bessel's function $J_0(z):=\sum_{n=0}^\infty (-1)^n (z/2)^{2n}/n!^2$ whose minimal equation is  $zy''(z)+y'(z)+zy(z)=0$. But the property $\{(\xi,\eta)\in \Qbar^2 : J_0(\xi)=e^{\eta}\}=\{(0,0)\}$ is not a new result as it is a consequence of the much more general \cite[p.~219, Theorem~4]{shid}: for any $\xi, \eta\in \Qbar^*$, the numbers $J_0(\xi), J_0'(\xi)$ and $e^{\eta}$ are algebraically independent over $\Qbar$. 
Other examples of a similar flavor involving generalized hypergeometric series ${}_pF_q[z^{q-p+1}]$ with rational parameters satisfying certain arithmetic conditions can be deduced from the very general algebraic independence result in \cite[p.~300, Corollary~4.6]{bbh}. However, these conditions do not exhaust all such series with rational parameters. To the best of our knowledge, Corollary~\ref{coro:2} is new for $f(z):={}_pF_q[z^{q-p+1}]$ with $1\le p \le q$, $1/2$ as a lower parameter and no upper parameter equal to $1/2 \mod \mathbb Z$, because neither assumption A) nor assumption B) on page 280 of \cite{bbh} is satisfied, for instance ${}_1F_1[1/3;1/2;z]$.

\medskip

Corollary~\ref{coro:2} can be quantified in the rational and strict case.  Our method to prove Theorem \ref{theo:2} is independent though of that of Theorem \ref{theo:1} (based on a recent result of Delaygue \cite{Delaygue}), as it uses a new linear independence measure for values of $E$-functions (Proposition~\ref{prop:1} stated in \S\ref{subsecenoncemesure}); see below for more details.  When $x$ is a positive real number, we denote by $\ln(x)$ its napierian logarithm. 
\begin{theo} \label{theo:2} 
Let $f\in \mathbb Q[[z]]$ be a strict $E$-function, and 
 $\xi\in \mathbb Q^*$ be such that $f(\xi)>0$ and $ \ln(f(\xi))\not\in\Q$. Then there exist $c,d>0$ such that, for all $(a,b)\in \mathbb Z\times \mathbb N^*$, 
\begin{equation}\label{eq:irratmeasurelogE}
\left\vert \ln(f(\xi))-\frac{a}{b}\right\vert \ge \frac{1}{\exp({c b^d})}.
\end{equation}
\end{theo}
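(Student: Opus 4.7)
\textbf{Plan for the proof of Theorem \ref{theo:2}.} The idea is to reduce the irrationality measure for $L:=\ln(f(\xi))$ to a linear independence measure for the two values $f(\xi)$ and $e^{\mu\xi}$, where $\mu\in\Q$ depends on the approximation $(a,b)$, and then invoke Proposition~\ref{prop:1}.

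Fix $(a,b)\in\Z\times\N^*$ and set $\varepsilon:=|L-a/b|$; the case $\varepsilon\ge 1$ is trivial, so assume $\varepsilon\le 1$. The mean value theorem applied to the exponential function on the real segment joining $a/b$ to $L$ yields
\begin{equation*}
|f(\xi)-e^{a/b}|\le e^{L+1}\varepsilon.
\end{equation*}
Setting $\mu:=a/(b\xi)\in\Q$ one has $e^{a/b}=e^{\mu\xi}$, so the preceding inequality reduces the problem to proving a lower bound for $|f(\xi)-e^{\mu\xi}|$ that is subexponential in the naive height $H(\mu)$ of $\mu$.

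To this end, first observe that $f(z)$ is not of the form $e^{\beta z}$ with $\beta\in\Qbar$: if it were, the condition $f\in\Q[[z]]$ would force $\beta\in\Q$, whence $L=\beta\xi\in\Q$, contradicting the hypothesis. Hence $f(z)$ and $e^{\mu z}$ are $\Qbar$-linearly independent strict $E$-functions in $\Q[[z]]$. Moreover $L\notin\Q$ forces $f(\xi)\ne e^{\mu\xi}$, so the integer linear form $X_1-X_2$ (of height $1$) takes a nonzero value at $(f(\xi),e^{\mu\xi})$. Applying Proposition~\ref{prop:1} to this pair of $E$-functions at $\xi$ then provides a lower bound of the shape
\begin{equation*}
|f(\xi)-e^{\mu\xi}|\ge \exp\bigl(-c'\,H(\mu)^{d'}\bigr),
\end{equation*}
with $c',d'>0$ depending only on $f$ and $\xi$. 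Since $\xi$ is fixed and $\varepsilon\le 1$ forces $|a|\le b(|L|+1)$, one has $H(\mu)\le C\,b$ for some $C=C(f,\xi)$. Comparing this lower bound with the upper bound obtained in the previous paragraph produces the required inequality \eqref{eq:irratmeasurelogE} for suitable constants $c,d$.

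The real difficulty is concentrated in Proposition~\ref{prop:1} itself. Classical linear independence measures for $E$-function values, such as Shidlovskii's, emphasize the dependence on the height of the linear form—here trivially equal to $1$, so those classical measures would be useless. What is needed is, on the contrary, a measure in which the height of the linear form is absorbed into the constant, while the dependence on the parameters of the $E$-functions (here essentially $H(\mu)$, which varies with $(a,b)$) is made effective and subexponential. Granting such a measure, the deduction of Theorem~\ref{theo:2} is precisely the short computation sketched above, together with the observation that the resulting bound $\exp(-c b^d)$ trivially excludes $\ln(f(\xi))$ from being an ultra-Liouville number in the sense of Marques and Moreira.
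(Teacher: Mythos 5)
Your overall strategy is the paper's: reduce $|\ln(f(\xi))-a/b|$ to $|f(\xi)-e^{a/b}|$ by the mean value theorem, then invoke a linear independence measure whose constants depend explicitly (and polynomially) on the parameters of the varying exponential $e^{\mu z}$ rather than on the height of the linear form, which is uselessly equal to $1$ here. You have correctly isolated that observation, which is indeed the heart of the matter. But the reduction as you state it has genuine gaps. First, the measure you need is Proposition~\ref{propmino} in the form of Corollary~\ref{coromino}, not Proposition~\ref{prop:1}, which is a finiteness statement; more importantly, that measure does not apply to ``the pair $f, e^{\mu z}$'': it requires a full vector of $\Q(z)$-linearly independent strict $E$-functions forming a solution of a first-order system $Y'=AY$ with $\xi T(\xi)\neq 0$. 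So you must adjoin the constant $1$ and the derivatives of $f$ up to one less than the order of its minimal inhomogeneous differential equation, and you must discard the finitely many rationals $\mu$ for which $e^{\mu z}$ lies in the $\Q(z)$-span of these functions (using $\ln(f(\xi))\notin\Q$ to dispose of the corresponding $a/b$ separately). The $\Qbar$-linear independence of the two functions $f$ and $e^{\mu z}$, which is what you verify, is not the relevant hypothesis and does not suffice.

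Second, the hypothesis $\xi T(\xi)\neq 0$ can fail for the natural system attached to $f$; the paper removes the non-zero singularities via a desingularization statement (Proposition~\ref{propdesing}), which requires $f(\xi)$ to be transcendental. This forces a separate treatment of the case where $f(\xi)$ is algebraic, which your proposal omits entirely: there the conclusion follows from Mahler's theorem that logarithms of algebraic numbers $\neq 1$ are not Liouville numbers. These are not cosmetic issues: without the completion to a full system, the exclusion of the bad values of $\mu$, and the singularity/algebraic-value dichotomy, the ``short computation'' you sketch cannot be carried out. Once they are supplied, your final step (bounding the height of $\mu$ by $Cb$, using that $|\mu|$ stays bounded when $e^{a/b}$ is close to $f(\xi)$, and assembling the bound $\exp(-cb^d)$) does match the paper's conclusion.
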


In particular, this result implies that $\ln(f(\xi))$ is not an ultra-Liouville number (as defined in \cite{MM}).

Theorem~\ref{theo:2} applies to any $\xi\in \mathbb Q^*$ such that $J_0(\xi)>0$ because 
 $\ln(J_0(\xi))$ is then a transcendental number by the above mentioned result; the irrationality measure for $\ln(J_0(\xi))$ is new to our knowledge. Note that Theorem~\ref{theo:2} can also be applied to any $f\in \mathbb Q[z]\setminus\{0\}$, but in this case the lower bound can be much improved because it is known that for any $\alpha\in \mathbb Q_{>0}$, $\alpha\neq 1$, the number $\ln(\alpha)$ is an irrational number and not a Liouville number; see \cite[p.~150, Satz~5]{mahler}. If $f(\xi)<0$, the same result holds for $-f(\xi)$ instead of $f(\xi)$.

The constants $c,d$ in \eqref{eq:irratmeasurelogE} depend on $f$ and $\xi$; they are effective but we did not try to compute them (this could be done in principle) because it is likely that the lower bound in \eqref{eq:irratmeasurelogE} is not optimal and could be replaced by $c/b^d$ for some other effective constants $c,d>0$, proving that $\ln(f(\xi))$ is not a Liouville number.~It does not seem that this improvement could be obtained with our method, which relies on the following observation: 
\begin{center}
 $\ln(f(\xi))$ is close to $a/b$ if, and only if, $f(\xi)-\exp(a/b)$ is small.
\end{center}
Considering the latter as a linear combination (with coefficients 1 and $-1$) of values of $E$-functions, one may try to apply linear independence measures due to Shidlovskii~\cite{shid}. A first problem is that such measures are optimized for linear forms with very large coefficients, which is not the case here. But another more important problem arises: the constants in such measures depend (usually in a non-explicit way) on the functions involved, and in our setting $\exp(az/b)$ is amongst them. For this reason we need a linear independence measure in which the dependencies of the constants in terms of the functions are explicit. We prove such a result, namely Proposition~\ref{propmino}, in \S\ref{sec:2}.

We note that using instead Brownawell's more general algebraic independence measure~(\footnote{Brownawell's measure is slightly ineffective because of the ineffectivity of Shidlovskii's constant $n_0$, that will also be used in the present paper. As we shall explain, $n_0$ can now be bounded effectively, removing any form of ineffectivity in Shidlovskii's and Brownawell's measures.}) in \cite{brownawell} (and making explicit in it the dependence on the parameters we need, as with Shidlovskii's measure), we only found a lower bound of the form $1/\!\exp(\exp(c b^d))$ on the right-hand side of \eqref{eq:irratmeasurelogE}, provided that $f, f', \ldots, f^{(\eta-1)}$ are algebraically independent (where $\eta$ is the minimal order of a non-trivial differential equation satisfied by a strict $E$-function $f\in \mathbb Q[[z]]$).

\medskip

The structure of this paper is the following. In Section \ref{sec:1} we prove Theorem~\ref{theo:1} using Delaygue's analogue of the Lindemann-Weierstrass theorem. Then we state and prove in Section \ref{sec:2} the linear independence measure that we shall use in Section \ref{sec:3} to prove Theorem~\ref{theo:2}.

\section{Proof of Theorem~\ref{theo:1}} \label{sec:1}

\subsection{Delaygue's analogue of the Lindemann-Weierstrass theorem}\label{subsecLW}

Given an $E$-function $f(z)=\sum_{n=0}^\infty \frac{a_n}{n!}z^n $, let $\mathfrak S(f)\subset\Qbar\etoile$ denote the set of finite singularities of the $G$-function $ \sum_{n=0}^\infty a_n z^{n} $. 

\medskip

We shall derive Theorem~\ref{theo:1} from the following special case of Delaygue's analogue of the linear version of the Lindemann-Weierstrass theorem (see \cite[Corollary~2.2]{Delaygue}).  

\begin{prop}\label{propLW} Let $f_1$, $f_2$ be $E$-functions and $z_1$, $z_2$ be non-zero algebraic numbers such that $f_1(z_1) = f_2(z_2)$ is transcendental. Then $z_1/z_2$ can be written as $\varrho_1/\varrho_2$ with $\varrho_1\in \mathfrak S(f_1) $ and $\varrho_2\in \mathfrak S(f_2) $.
\end{prop}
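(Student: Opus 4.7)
The plan is to derive Proposition~\ref{propLW} as the two-function case of \cite[Corollary~2.2]{Delaygue} by contraposition. Delaygue's theorem is an $E$-function analogue of the linear Lindemann--Weierstrass theorem: for $E$-functions $f_1,\ldots,f_m$ and non-zero algebraic numbers $z_1,\ldots,z_m$ satisfying a suitable generic condition on the ratios $z_i/z_j$ relative to the singular sets $\mathfrak S(f_i)$ of the associated $G$-functions, the values $f_1(z_1),\ldots,f_m(z_m)$ are either individually algebraic or, together with $1$, linearly independent over $\Qbar$.

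Specializing to $m=2$, I would argue by contradiction. Suppose $f_1(z_1)=f_2(z_2)$ is transcendental, yet there exist no $\varrho_1\in\mathfrak S(f_1)$ and $\varrho_2\in\mathfrak S(f_2)$ with $z_1/z_2=\varrho_1/\varrho_2$. The latter condition is precisely what places the pairs $(f_1,z_1)$ and $(f_2,z_2)$ in the ``generic position'' required by Delaygue's result. That result then forces the alternative: either one of the two values is algebraic (ruled out since both equal the same transcendental number) or $1,f_1(z_1),f_2(z_2)$ are $\Qbar$-linearly independent. But the assumed equality rewrites as $f_1(z_1)-f_2(z_2)=0$, a non-trivial $\Qbar$-linear relation with coefficients $1,-1$, which yields the desired contradiction.

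The main obstacle I anticipate is faithfully translating the precise hypothesis of \cite[Corollary~2.2]{Delaygue} --- phrased in terms of singular loci of companion $G$-functions attached to a whole system of $E$-functions --- into the clean ratio condition $z_1/z_2\notin\mathfrak S(f_1)\cdot\mathfrak S(f_2)^{-1}$ appearing in the proposition. One must check that $\mathfrak S(f)$, as defined above for a single $E$-function via the associated $G$-function $\sum a_n z^n$, captures exactly the obstruction set that enters Delaygue's statement. The exponential case $f(z)=e^z$, whose associated $G$-function is $\sum_{n\geq 0}z^n=1/(1-z)$ with its only finite singularity at $1$, provides a useful sanity check: if $z_1,z_2\in\Qbar\etoile$ and $e^{z_1}=e^{z_2}$ is transcendental, then $z_1=z_2$ (otherwise $z_1-z_2=2\pi i k$ would be a non-zero algebraic multiple of $2\pi i$), so $z_1/z_2=1=1/1\in\mathfrak S(e^z)\cdot\mathfrak S(e^z)^{-1}$, in agreement with the conclusion.
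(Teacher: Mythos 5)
Your proposal matches the paper's treatment: the paper gives no independent proof of Proposition~\ref{propLW}, but simply states it as a special case of Delaygue's Corollary~2.2, exactly as you do (the paper only adds a remark checking that Delaygue's argument extends from strict $E$-functions to $E$-functions in Siegel's sense). Your reconstruction of the deduction --- contraposition plus the observation that $f_1(z_1)-f_2(z_2)=0$ is a non-trivial $\Qbar$-linear relation incompatible with the linear-independence alternative when the common value is transcendental --- is the intended reading, and your sanity check with $e^z$ is sound.
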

The important point for us, in the conclusion of Proposition~\ref{propLW}, is that $z_1/z_2$ belongs to a finite set determined by $f_1$ and $f_2$.
\begin{Remark} Proposition~\ref{propLW} is proved in \cite{Delaygue} for $E$-functions in the strict sense. However, it also holds for $E$-functions in the large sense (and so does the general result \cite[Theorem~2.1]{Delaygue}) because only the following properties are used in the proof, and they hold for $G$-functions and $E$-functions in the large sense by the results proved in \cite{andre2, lepetit}:
\begin{enumerate}
 \item[(1)] The point at infinity is regular or a regular singularity of any  $G$-function, because it is solution of a fuchsian differential operator.
 \item[(2)] A   $G$-function without finite singularity is a polynomial. Indeed such a function is entire, and has moderate growth at infinity by (1). By Liouville's theorem, it is a polynomial.
 \item[(3)] Any   $E$-function is annihilated by an $E$-operator, without non-zero finite singularity.
 \item[(4)] Beukers' refined version of the Siegel-Shidlovskii theorem (\textit{i.e.}~\cite[Theorem~1.3]{beukers}) holds.
\end{enumerate}

\end{Remark}

\subsection{Application to Theorem~\ref{theo:1}}

We first prove 
\begin{prop}\label{prop:1} Let $f$ be a non-constant $E$-function and $\chi\in\mathbb C$. Then the set $\{\alpha \in \Qbar : f(\alpha)=\chi\}$ is finite.
\end{prop}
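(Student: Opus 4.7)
The plan is to split on whether $f$ is a polynomial and, in the transcendental case, on whether the target value $\chi$ is algebraic. If $f$ is a non-constant polynomial, then $f - \chi$ is a non-zero polynomial, so its set of roots has at most $\deg f$ elements, and we are done.

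Assume now that $f$ is transcendental and $\chi \notin \Qbar$. I would apply Proposition~\ref{propLW}. Supposing the set $\{\alpha \in \Qbar : f(\alpha) = \chi\}$ non-empty, fix an element $\alpha_0$ in it; note $\alpha_0 \neq 0$ since $f(0)$ is algebraic while $\chi$ is not. For any other $\alpha \in \Qbar^*$ in the set, the common value $f(\alpha) = f(\alpha_0) = \chi$ is transcendental, so Proposition~\ref{propLW} applied with $f_1 = f_2 = f$, $z_1 = \alpha$, $z_2 = \alpha_0$ forces $\alpha/\alpha_0$ to lie in the finite set $\{\varrho_1/\varrho_2 : \varrho_1, \varrho_2 \in \mathfrak S(f)\}$. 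Hence $\alpha$ ranges over finitely many values.

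There remains the case $\chi \in \Qbar$, where Proposition~\ref{propLW} cannot be invoked directly because the common value is algebraic. Here I would use the Siegel--Shidlovskii theorem in its linear-independence form (or Beukers' refinement \cite{beukers}): since $f$ is transcendental, $\{1, f\}$ is linearly independent over $\Qbar(z)$, and therefore $\{1, f(\alpha)\}$ is linearly independent over $\Qbar$ for all $\alpha \in \Qbar^*$ outside a finite exceptional set, i.e.\ $f(\alpha) \notin \Qbar$ for such $\alpha$. The set $\{\alpha \in \Qbar : f(\alpha) = \chi\}$ is therefore contained in the union of that finite exceptional set with $\{0\}$, and hence is finite.

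The main obstacle I foresee is precisely the dichotomy on $\chi$: Delaygue's analogue is a transcendence statement and cannot be used when $\chi$ is algebraic, so a second classical input (Shidlovskii's theorem) is needed. Once this split is accepted, each branch of the argument is essentially one line.
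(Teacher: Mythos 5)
Your proof is correct and follows essentially the same route as the paper: split on whether $\chi$ is algebraic or transcendental, handle the transcendental case by applying Proposition~\ref{propLW} with $f_1=f_2=f$ to pin $\alpha/\alpha_0$ in a finite set, and handle the algebraic case by the finiteness of algebraic points where a transcendental $E$-function takes algebraic values. The only difference is cosmetic: the paper cites the main result of \cite{ar} for that last finiteness statement, whereas you re-derive it from Beukers' refinement of Siegel--Shidlovskii (which, to be fully rigorous, should be applied to a complete solution vector $(1,f,f',\dots)$ of a first-order system rather than to $\{1,f\}$ alone).
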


\begin{proof}[Proof of Proposition~\ref{prop:1}]
If $\chi\in \Qbar$, this is a consequence of the main result in \cite{ar}. Otherwise, let us fix $\alpha_0\in\Qbar$ such that $f(\alpha_0)=\chi$; if there is no such $\alpha_0$, the corresponding set is empty and therefore finite. For any $\alpha \in\Qbar$ such that $f(\alpha )=\chi$, Proposition~\ref{propLW} implies that $\alpha/\alpha_0$ belongs to a finite set determined by $f$. This concludes the proof of Proposition~\ref{prop:1}.
\end{proof}

\begin{proof}[Proof of Theorem~\ref{theo:1}] 
First of all, let us consider the set of pairs $(\xi,\eta)\in\Qbar^2$ such that $f(\xi)=g(\eta)$ is algebraic. Recall from \cite{ar} that a transcendental $E$-function takes algebraic values at only finitely many algebraic points. Therefore each of $\xi$, $\eta$ belongs to a finite set determined by $f$ and $g$: so does the pair $(\xi,\eta)$.

Now let us move to pairs $(\xi,\eta)\in\Qbar^2$ such that $f(\xi)=g(\eta)$ is transcendental; this implies $\xi,\eta\neq 0$. Then Proposition~\ref{propLW} provides a finite set (determined by $f$ and $g$) that contains all quotients $\eta/\xi$. For each value $\beta$ of the quotient, the $E$-function $f(z)-g(\beta z)$ vanishes at $\xi$. Since this $E$-function is not identically zero by hypothesis, $\xi$ belongs to a finite set determined by $\beta$. So does $\eta$, and this concludes the proof that 
$\{(\xi,\eta)\in \Qbar^2 : f(\xi)=g(\eta)\}$ is a finite set. This is equivalent to the fact that $I:=f(\Qbar)\cap g(\Qbar)$ is a finite set. Indeed, if $\{(\xi,\eta)\in \Qbar^2 : f(\xi)=g(\eta)\}$ is finite, then obviously $I$ is finite. Conversely, if $I$ is finite, then for any $\chi\in I$, there are only finitely many $(\xi,\eta)\in \Qbar^2$ such that $f(\xi)=\chi=g(\eta)$ by Proposition~\ref{prop:1}. This completes the proof of Theorem~\ref{theo:1}. 
\end{proof}

\section{An explicit version of Shidlovskii's linear independence measure} \label{sec:2}

In this section, we prove a linear independence measure of values of $E$-functions, namely Proposition~\ref{propmino} stated in \S \ref{subsecenoncemesure}. The important point for our application to Theorem~\ref{theo:2} is that when the last function is $\exp(\beta z)$, the constants are controlled in terms of $\beta$: this special case is studied in Corollary~\ref{coromino}.

The structure of proof is similar to that of Shidlovskii's measure, so we recall it in \S \ref{ssec:21}. We proceed to the proof in \S \ref{ssec:22}.

\subsection{Statement of the measure}\label{subsecenoncemesure}

Assume $Y:={}^t(f_1, \ldots, f_m)$ is a vector of strict $E$-functions in $\mathbb Q[[z]]$, solution of a differential system $Y'=AY$ with $A\in M_m(\mathbb Q(z))$; assume moreover that $f_1, \ldots, f_m$ are $\mathbb Q(z)$-linearly independent. Let $ T\in \mathbb Z[z]\setminus\{0\}$ be a common denominator of minimal degree of the entries of $A:= (A_{i,j})_{1\le i,j\le m}$. Let $\xi\in \mathbb Q$ be such that $\xi T(\xi)\neq 0$. We consider any integer $H\ge 1$ and any vector $(a_1, \ldots, a_m)\in \mathbb Z^m\setminus \{0\}$ such that $\max\vert a_k\vert \le H$.

\medskip

As a special case $\mathbb K=\mathbb Q$ of \cite[p.~357,~Theorem~1]{shid}, Shidlovskii proved that for any $\varepsilon>0$, there exists an ineffective constant $c>0$ such that, in the above situation, we have 
\begin{equation}\label{eq:mesindepshid}
\bigg\vert \sum_{j=1}^m a_j f_j(\xi) \bigg\vert > \frac{c}{H^{m-1+\varepsilon}}.
\end{equation}
This constant $c$ is now effective (because the integer $n_0$ in Shidlovskii's multiplicity estimate can be bounded, see below). However it depends on $f_1$, \ldots, $f_m$ in a way which is not made explicit by Shidlovskii. This is a problem to prove Theorem~\ref{theo:2}, since in our setting 
$f_m$ will be $\exp(\beta z)$ and we need constants that we control explicitly in terms of $\beta$. 

We shall prove a linear independence measure, namely Proposition~\ref{propmino}, in which the dependencies of the constants on certain parameters important for us are made explicit, unlike the classical measures in the same context. With this aim in mind, we define as in \cite[p.~93]{shid}: 
\begin{equation}\label{eq:defpq}
p:=\min_{1\le j \le m}\ord_{z=0}f_j(z)
\quad \mbox{ and } \quad 
q:
=\max(\deg T, \max_{i,j} \deg(TA_{i,j})).
\end{equation}

Since the $f_i=\sum_{k=0}^\infty \frac{\varphi_{k,i}}{k!}z^k$, $i=1,\ldots, m$, are $E$-functions in the strict sense, there exists a constant $C>0$ such $\vert \varphi_{k,i}\vert \le C^{k+1}$ for all $k\ge 0, i\in\{1, \ldots, m\}$, and there exists a constant $D>0$ such the common denominator $d_{k,i}$ of $\varphi_{0,i}, \ldots, \varphi_{k,i}$ satisfies $d_{k,i}\le D^{k+1}$ for all $k\ge 0, i\in\{1, \ldots, m\}$. 

We also denote by $E$ the maximum modulus of the coefficients of the polynomial $T(z)$ and of all the polynomials $T(z)A_{i,j}(z)$.

We denote by $n_0\ge 0$ the constant in Shidlovskii's zero estimate. Shidlovskii's proof of the existence of $n_0$ is not effective (see the proof of \cite[p.~93, Lemma~8]{shid} and the definition of $n_0$ in \cite[p.~99, Eq.~(83)]{shid}). However, following the works of Bertrand, Beukers, Chirskii and Yebbou \cite{bb, bcy}, it is now known that the integer $n_0$ can be bounded above using explicit quantities that depend on the matrix $A$ of the differential system. Certain of these quantities are themselves bounded by means of the generalized local exponents at the singularities of $A$ and the point at infinity (for their definition, see \cite[Appendix]{db} or \cite[\S 2.3.4]{brs}). More precisely, from the discussion in \cite[p.~252]{bcy} we have that 
\begin{equation} \label{eqmajonz}
n_0\le 2(q+1)m^2(\mathcal{E}+(q+1)m+1),
\end{equation} 
where $\mathcal{E}$ is the maximum of all the modulus of the generalized local exponents at the infinite point and at the finite singularities of $A$.

At last we denote by $\kappa$ any real number such that 
$$0<\kappa\leq \max_{1\leq j \leq m}|f_j(\xi)|.$$

\begin{prop} \label{propmino}
There exists an effective constant $c$, which depends on $m, p, q, \xi$, $T(z)$, $\kappa$ and polynomially on $C$, $D$ and $E$, such that if $H\geq \max(3,n_0^{n_0})$ then
$$
\bigg\vert \sum_{j=1}^m a_j f_j(\xi) \bigg\vert > \frac{1}{H^c}.
$$
\end{prop}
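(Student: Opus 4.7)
The plan is to follow, step by step, the classical scheme used by Shidlovskii to derive the measure \eqref{eq:mesindepshid}, the only novelty being to keep careful track at every stage of the explicit dependence of all constants on the parameters $C$, $D$, $E$, so that the final exponent $c$ is polynomial in them. First I would fix an auxiliary integer $N$ (to be tuned later as a suitable multiple of $\log H$) and apply Siegel's lemma to construct polynomials $P_{0,1},\ldots, P_{0,m}\in \mathbb Z[z]$ of degree $\le N$, not all zero, such that the remainder
\[
R_0(z):=\sum_{j=1}^m P_{0,j}(z) f_j(z)
\]
vanishes at $z=0$ to order $\ge mN+p-m+1$. The associated linear system has coefficients built from the $\varphi_{k,j}/k!$, cleared of denominators using $d_{k,j}\le D^{k+1}$, so Siegel's lemma yields coefficients of the $P_{0,j}$ of absolute value $\le (\gamma_1 CD)^{\gamma_2 N}$, with $\gamma_1,\gamma_2$ depending only on $m$ and $p$.

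Next I would iterate by setting $R_k(z):=T(z)^k R_0^{(k)}(z)$ and expanding using the system $Y'=AY$: this gives $R_k(z)=\sum_{j=1}^m P_{k,j}(z)f_j(z)$ with $P_{k,j}\in\mathbb Z[z]$ of degree $\le N+kq$, coefficient heights bounded by $(\gamma_3 CDE)^{\gamma_4(N+k)}$, and $\ord_{z=0}R_k(z)\ge mN+p-m+1-k$. Applying Shidlovskii's zero estimate at $z=\xi$---whose constant $n_0$ is now effectively controlled by \eqref{eqmajonz}---I would extract $m$ indices $0\le k_1<\cdots<k_m \le n_0 N$ such that the $m\times m$ matrix $\bigl(P_{k_i,j}(\xi)\bigr)_{i,j}$ is invertible. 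By the standard refinement of the zero estimate, one may further arrange the selection so that replacing any one of these rows by $(a_1,\ldots,a_m)$ still produces an invertible matrix (this is where the non-triviality of the linear form $\sum a_j f_j$ on the $\mathbb Q(z)$-space spanned by $f_1,\ldots,f_m$ is used).

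The conclusion is then obtained by Cramer inversion. Multiplying the entries of this $m\times m$ matrix by the common denominator of the Taylor coefficients of the $f_j$ up to order $\le n_0 N+N$ (bounded by $D^{O(N)}$) and by $T(\xi)^{k_m}$ turns its determinant into a non-zero rational integer, bounded above by a quantity of the form
\[
(\gamma_5 CDE)^{\gamma_6 N}\cdot \Bigl|\textstyle\sum_{j=1}^m a_j f_j(\xi)\Bigr|\cdot H^{m-1} + (\gamma_7 CDE)^{\gamma_8 N}\cdot \kappa^{-1}\cdot H^{m-1}\cdot\max_i|R_{k_i}(\xi)|,
\]
where the analytic bound $|R_{k_i}(\xi)|\le (\gamma_9 CDE)^{\gamma_{10}N}\cdot|\xi|^{mN-m-k_i}$ comes from the vanishing order of $R_0$ at $0$. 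Choosing $N$ proportional to $\log H$, with a constant of proportionality that is a suitable polynomial in $C$, $D$, $E$ (and in $|\xi|,|T(\xi)|^{-1},\kappa^{-1}$), the integer obtained is then strictly less than $1$ in absolute value, which forces the lower bound in the proposition. The assumption $H\ge n_0^{n_0}$ is exactly what is needed to ensure that this $N$ exceeds $n_0$, so that the zero estimate can be invoked.

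The main obstacle is not the structure of the argument, which is a literal transcription of Shidlovskii's proof, but the bookkeeping required so that every constant appearing at each stage (Siegel's lemma, iteration of the system, denominator clearing, Cramer's rule, zero estimate) combines into a single exponent polynomial rather than exponential in $C$, $D$, $E$. In particular one must exploit the explicit bound \eqref{eqmajonz} on $n_0$ rather than treat it as a black-box constant, since the passage from $n_0$ to the threshold $n_0^{n_0}$ and to the range of admissible $N$ is the only point in the argument where the dependence on the matrix $A$ could otherwise become non-polynomial.
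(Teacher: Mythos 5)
Your overall architecture (Siegel's lemma construction, iteration through the differential system, effective zero estimate via \eqref{eqmajonz}, a Cramer-type determinant argument, and tuning of the auxiliary degree against $H$) is the same as the paper's, which indeed follows Shidlovskii's scheme step by step. But the very first step contains a quantitative error that the rest of the argument cannot absorb. You impose vanishing of $R_0$ at $0$ to order $mN+p-m+1$; since the $m$ polynomials of degree $\le N$ carry $m(N+1)$ coefficients and the conditions for the orders $<p$ are automatic, this leaves a margin of only $2m-1$ free parameters over roughly $mN$ equations. Siegel's lemma then gives a solution of height $(MB)^{M'/(M-M')}$ with $M'/(M-M')\approx mN/(2m-1)$, i.e.\ an exponent \emph{linear} in $N$ applied to coefficients $B$ that are themselves of size $(CD)^{O(N)}$ (times factorials coming from the $\varphi_{k,j}/k!$): the resulting height is $e^{O(N^2)}$ at best, not $(\gamma_1 CD)^{\gamma_2 N}$ as you claim. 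Since the gain coming from the vanishing order is only $1/\tau!\approx e^{-mN\log N}$, a height $e^{O(N^2)}$ makes $R_0(\xi)$ large and the proof collapses. This is precisely why Shidlovskii, and the paper, take the vanishing order to be $\tau=m(n+1)-\lfloor\varepsilon_1 n\rfloor-1$ with $0<\varepsilon_1<\frac{1}{2m-1}$: the margin $\approx\varepsilon_1 n$ makes the Siegel exponent a constant $O(m/\varepsilon_1)$, giving a height of the form $n!\,(2CD)^{O(m^2 n/\varepsilon_1)}$, and the loss $\varepsilon_1$ in the final exponent is harmless.

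Two further points would need repair even after fixing the margin. First, you allow the derivation indices $k_i$ to range up to $n_0N$; Shidlovskii's Lemma~10 already gives $m$ independent forms among $R_1,\dots,R_{m+t_1}$ with $t_1$ as in \eqref{eq:defintiont}, i.e.\ $k=O(\varepsilon_1 n)$. This matters: each derivation costs $q$ in degree and one unit in the vanishing order of $R_k$ at $0$, so letting $k$ grow like $n_0N$ would again destroy the main term $N^{-(m-1-\varepsilon)N}$. Second, choosing $N$ proportional to $\log H$ yields $N\log N\asymp \log H\log\log H$ and hence only $|L_0|\ge H^{-c\log\log H}$, not $H^{-c}$ with $c$ independent of $H$; one must take $N$ minimal with $N^{\delta N}\gtrsim H$ (so $N\asymp \log H/\log\log H$), which is also where the hypothesis $H\ge\max(3,n_0^{n_0})$ is used to guarantee $N\ge n_0$, exactly as you intend.
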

Using Eq. \eqref{eqmajonz}, the lower bound on $H$ can be replaced by an explicit lower bound in terms of $m,q,\mathcal{E}$.

The important point in Proposition~\ref{propmino} is that $c$ depends on $f_1$, \ldots, $f_m$ only through a given set of parameters, and also that the dependence on $C$, $D$, $E$ is polynomial.

 The constant $c$ is effective because the only potential source of ineffectivity of the proof, i.e. $n_0$, is now known to be effective. In principle it would be possible to make $c$ completely explicit, but this would make the statement of our results much more complicated for no immediate application better than those we present here. Moreover such explicit formulas are not sharp in general.

In this proposition, and throughout this section, the polynomial dependence of $c$ with respect to $C,D,E$ means that there exists a polynomial $P\in\R[X,Y,Z]$ with non-negative coefficients and degree depending only on $m, p, q, \xi$, $T(z)$, $\kappa$, such that one may choose $c= P(C,D,E)$.
 
\bigskip

To prove Theorem~\ref{theo:2} we shall be interested in the following situation:
\begin{equation}\label{eqpartic}
 f_m(z)=\exp(\beta z) \mbox{ and $f_1,\ldots,f_{m-1}$ are independent from $\beta$.}
\end{equation}
In precise terms, when we refer to \eqref{eqpartic} we shall assume that $Z=\tra (f_1,\ldots,f_{m-1})$ is a vector of $E$-functions with rational coefficients, solution of a differential system $Z'=BZ$ with $B\in M_{m-1}(\Q(z))$. Then $Y=\tra (f_1,\ldots,f_{m})$ is solution of $Y'=AY$ where $A\in M_{m}(\Q(z))$ is blockwise diagonal, with diagonal blocks $B$ and $\beta$. The important point is that 
$f_1,\ldots,f_{m-1}$ and $B$ are independent from $\beta$. In this setting we have the following special case of Proposition~\ref{propmino}. For a given rational number $r\neq 0$, we set $\textup{den}(r)$ the positive denominator of $r$ written in reduced form. 

\begin{coro} \label{coromino}
In the situation \eqref{eqpartic},
there exists an effective constant $c$, which depends only on $f_1,\ldots,f_{m-1},\xi$ and polynomially on $|\beta|$ and $\textup{den}(\beta)$, such that if $H\geq \max(3,n_0^{n_0})$ then
$$
\bigg\vert \sum_{j=1}^m a_j f_j(\xi) \bigg\vert > \frac1{H^c}.
$$
Moreover the assumption on $H$  may be stated as a   lower bound    in terms of $f_1,\ldots,f_{m-1}$ only, independently of $\beta$.
\end{coro}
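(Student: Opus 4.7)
The plan is to apply Proposition~\ref{propmino} to the block-diagonal system of \eqref{eqpartic}, and to read off how each parameter entering the constant $c$ specialises in terms of $\beta$. Set $Y=\tra(f_1,\ldots,f_{m-1},e^{\beta z})$ with $Y'=AY$ and $A=\textup{diag}(B,\beta)\in M_m(\Q(z))$, and choose $T(z):=\textup{den}(\beta)\,T_B(z)\in\Z[z]$, where $T_B$ is a common denominator of minimal degree for $B$; then $T$ serves as such for $A$, with $\deg T=\deg T_B$. The parameters of Proposition~\ref{propmino} specialise as follows: $m$ is fixed, $p=\min_j\ord_{z=0}f_j=0$ (since $e^{\beta z}$ has nonzero constant term), and $q=\max(\deg T,\max_{i,j}\deg(TA_{i,j}))$ coincides with its $B$-analogue because the only new entry $TA_{m,m}=\textup{den}(\beta)\beta T_B$ still has degree $\deg T_B$. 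For $\kappa$, I choose $\kappa:=\max_{1\le j\le m-1}|f_j(\xi)|$ whenever this is positive, which is $\beta$-independent; the degenerate case in which all $f_j(\xi)$ vanish for $j<m$ is handled separately, since then $\sum_j a_jf_j(\xi)=a_me^{\beta\xi}$ and an elementary estimate suffices.

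Next I track the arithmetic constants attached to $\exp(\beta z)=\sum_k \beta^k z^k/k!$. Since $\varphi_{k,m}=\beta^k$, the choices $C=\max(C_B,|\beta|,1)$ and $D=\max(D_B,\textup{den}(\beta))$ work, linear in $|\beta|$ and $\textup{den}(\beta)$ respectively. The matrix $TA$ has, beyond the entries coming from $B$ (whose coefficients gain a factor $\textup{den}(\beta)$), the single new entry $\textup{den}(\beta)\beta T_B$, so $E\le\textup{den}(\beta)(|\beta|+1)\max(E_B,\|T_B\|_\infty)$, polynomial in both $|\beta|$ and $\textup{den}(\beta)$. Since Proposition~\ref{propmino} provides polynomial dependence of $c$ on $C,D,E$, the polynomial dependence on $|\beta|$ and $\textup{den}(\beta)$ is transmitted; the remaining dependence on $T(z)$ must be shown — by inspection of the proof of Proposition~\ref{propmino} — to be polynomial in the natural size parameters of $T$, namely $\|T\|_\infty$ and $|T(\xi)|$, both of which are linear in $\textup{den}(\beta)$.

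Finally, for the lower bound on $H$, the estimate \eqref{eqmajonz} reduces the task to bounding $\mathcal{E}$, the maximum modulus of the generalized local exponents of $A$ at its finite singularities and at infinity. Because $A$ is block-diagonal, the finite singularities of $A$ are exactly those of $B$ and contribute only the exponents of $B$; at infinity, $B$ contributes its own exponents, and the scalar block $y'=\beta y$ contributes only through its pure exponential formal solution $e^{\beta z}$, whose regular-part exponent in the sense of \cite{bcy} is zero. Hence $\mathcal{E}$, $n_0$ and $n_0^{n_0}$ are all bounded by quantities depending only on $f_1,\ldots,f_{m-1}$.

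The hard part is this last point: verifying that under the precise definition of generalized local exponents used to derive \eqref{eqmajonz}, the irregular block $y'=\beta y$ contributes an exponent that vanishes (or at worst is bounded) independently of $\beta$. A secondary, related, technical point is to confirm, by reading the proof of Proposition~\ref{propmino} in \S\ref{ssec:22}, that its dependence on $T(z)$ is really polynomial in $\|T\|_\infty$ and $|T(\xi)|$, so that the scaling $T=\textup{den}(\beta)T_B$ introduces only a polynomial factor in $\textup{den}(\beta)$ into $c$.
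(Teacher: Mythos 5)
Your overall strategy is the paper's: apply Proposition~\ref{propmino} to the block-diagonal system and check that every parameter entering $c$ and $n_0$ is either independent of $\beta$ or polynomial in $|\beta|$ and $\textup{den}(\beta)$. Your treatment of $m,p,q,\kappa,C,D,E$ and of $\mathcal{E}$ matches the paper (and the point you call ``the hard part'' is exactly what the paper asserts: $\exp(\beta z)$ has null generalized exponents everywhere, including at the irregular singularity at infinity, since its formal solution there is $e^{\beta z}z^{0}$; your reasoning for this is correct).

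There is, however, one genuine gap, and you have created it yourself by choosing $T(z):=\textup{den}(\beta)\,T_B(z)$. Proposition~\ref{propmino} guarantees that $c$ depends \emph{polynomially} only on $C$, $D$, $E$; its dependence on $T(z)$ is left unspecified (it enters through $\widetilde{c_5}$, $C_{11}$, etc., via powers of the maximum modulus of the coefficients of $T$ with exponents growing like $m+t_1$). So once $T$ carries a factor $\textup{den}(\beta)$, you can no longer conclude from the \emph{statement} of Proposition~\ref{propmino} that $c$ is polynomial in $\textup{den}(\beta)$; you would have to reopen its proof, which you acknowledge but do not do. The paper avoids this entirely: since $A_{m,m}=\beta$ is a constant, hence already a polynomial in $z$, the new entry imposes no condition on the common denominator, and one takes $T=T_B$ independent of $\beta$. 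The integer content of $\beta$ is then absorbed solely into $D$ (via the Taylor coefficients $\beta^k$ of $e^{\beta z}$) and its size into $C$ and $E$ (via $TA_{m,m}=\beta T_B$), all of which Proposition~\ref{propmino} handles polynomially. The fix is simply to drop the factor $\textup{den}(\beta)$ from $T$. A secondary remark: in the degenerate case where $f_1(\xi)=\dots=f_{m-1}(\xi)=0$, your fallback estimate on $a_m e^{\beta\xi}$ fails when $a_m=0$ (the form is then zero); the paper instead observes that this case cannot occur, because $\xi$ is not a singularity of $Z'=BZ$ and the $f_j$ are not all identically zero, so you should argue impossibility rather than handle it separately.
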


\begin{proof}
In the setting of \eqref{eqpartic}, recall that $Z=\tra (f_1,\ldots,f_{m-1})$ is a solution of $Z'=BZ$ with $B\in M_{m-1}(\Q(z))$, and $Y=\tra (f_1,\ldots,f_{m})$ of $Y'=AY$. The matrix $A\in M_{m}(\Q(z))$ is blockwise diagonal, with diagonal blocks $B$ and $\beta$. Therefore $T(z)$ is independent of $\beta$; so are $m, p, q$, and also 
$\mathcal{E}$ because the function $\exp(\beta z)$ has null generalized exponents everywhere. Hence, Eq. \eqref{eqmajonz} shows that $n_0$ can be bounded independently of $\beta$. 

Moreover at least one of $f_1$, \ldots, $f_{m-1}$ does not vanish at $\xi$, because $\xi$ is not a singularity of the differential system $Z'=BZ$ (indeed it is not a pole of a coefficient of $B$, because all these coefficients are coefficients of $A$); of course the functions $f_1$, \ldots, $f_{m-1}$ are not identically zero because they are linearly independent over $\Q(z)$. Therefore we may choose $\kappa = \max_{1\leq j \leq m-1}|f_j(\xi)|$.

Since $A_{m,m}=\beta$, we may take $C=\max(\widetilde C, \vert \beta\vert)$, $D=\textup{den}(\beta)\widetilde{D}$ and $E\leq |\beta|\widetilde{E}$, where $\widetilde{C}$, $\widetilde{D}$ and $\widetilde{E}$ are quantities analogous to $C$, $D$ and $E$ when we consider only $f_1, \ldots, f_{m-1}$. Therefore $C$, $D$, $E$ depend linearly on $|\beta|$ and $\textup{den}(\beta)$. Applying Proposition~\ref{propmino} concludes the proof of Corollary~\ref{coromino}.
\end{proof}

\begin{Remark} 
 In \cite[p.~421, Theorem~2]{shid}, Shidlovskii also proved for $E$-functions in the strict sense an effective~(\footnote{Again, strictly speaking, it was ineffective when Shidlovskii proved it because of the presence of $n_0$, but it is now effective.}) refinement of \eqref{eq:mesindepshid} in which he replaced the exponent $m-1+\varepsilon$ by 
 $m-1+\gamma m ^{7/2}(\ln\ln(H))^{-1/2}$ where $\ln\ln(H)\ge \gamma^2\max(m^2, \ln(n_0))$ and $\gamma$ is a constant. However the dependence of $\gamma$ with respect to $f_1$, \ldots, $f_m$ is unknown while to prove Theorem~\ref{theo:2}, it is necessary to know how the constants depend on $\beta$ in the setting of \eqref{eqpartic}.
\end{Remark}

\subsection{Shidlovskii's measure \eqref{eq:mesindepshid}: sketch of proof} \label{ssec:21}

We first recall the statement of \cite[p.~107, Lemma~14]{shid} when the number field $\mathbb K=\mathbb Q$. Assume $Y:={}^t(f_1, \ldots, f_m)$ is a vector of $E$-functions in Siegel's sense~(\footnote{However, we shall make this proof explicit only for $E$-functions in the strict sense, otherwise it would be difficult to obtain a good control on the quantities we are interested in, as for instance the parameters $C$ and $D$ do not exist for $E$-functions in Siegel's sense.}) in $\mathbb Q[[z]]$ solution of a differential system $Y'=AY$ with $A\in M_m(\mathbb Q(z))$. Let $n\in \mathbb N$ and $\varepsilon_1 \in (0, \frac1{2m-1})$; the reason of this technical assumption on $\varepsilon_1$ will appear in \S\ref{ssec:effecfinal}. There exist $P_1, \ldots, P_m\in \mathbb Z[z]$ of degree at most $n$ and not all zero such that: 
$$
 b_{i,\nu} 
 =\mathcal{O}(n^{(1+\varepsilon_1)n}), \quad 
i=1,\ldots, m, \; \nu =0,\ldots, n,
$$
where 
$b_{i,\nu}$ is the coefficient of $z^\nu$ in $P_i(z)$ and
the symbol $\mathcal{O}$ is uniform in $i$ and $\nu$, and such that the function
$$
R:=\sum_{i=1}^m P_i(z)f_i(z) =\sum_{\nu=\tau}^\infty \frac{a_\nu}{\nu!} z^\nu
$$
satisfies $\ord_{z=0} R(z)\ge \tau$ with 
$$\tau=m(n+1)-\lfloor \varepsilon_1 n\rfloor-1,$$
and $ a_\nu=\nu^{\varepsilon_1 n}\mathcal{O}(n^n)$ for $\nu\ge \tau$. 

\medskip

In the same setting, define $$R_k:=\sum_{i=1}^m P_{k,i}(z)f_i(z)$$ with $P_{k,i}\in \mathbb Z[z]$ by $R_1=R$ and $R_{k+1}=T(z)R_k'(z)$; recall that $ T\in \mathbb Z[z]\setminus\{0\}$ is a common denominator of minimal degree of the entries of $A$. 

From now on, we assume that $f_1, \ldots, f_m$ are $\mathbb Q(z)$-linearly independent, and that $n\geq n_0$ where $n_0$ was introduced in \S \ref{subsecenoncemesure}.

Then by Lemma~10 of \cite[p.~101]{shid}, for any $\xi\in \mathbb C$ such $\xi T(\xi)\neq0$, the linear forms $R_k(\xi)$, $k=1,\ldots, m+t_1$, include $m$ linearly independent forms, where
\begin{equation}\label{eq:defintiont}
t_1:=q\frac{(m-1)m}{2}+ \lfloor \varepsilon_1 n\rfloor+p;
\end{equation}
we recall from \S \ref{subsecenoncemesure} (and \cite[p.~93]{shid}) that
$$p:=\min_{1\le i \le m}\ord_{z=0}f_i(z)
\quad \mbox{ and } \quad 
q:
=\max(\deg T, \max_{i,j} \deg(TA_{i,j}))
$$
with $A:=(A_{i,j})_{1\leq i,j \leq m}$.
Now Lemma~15 in \cite[p.~110]{shid} says the following. Suppose that $\varepsilon_1=\varepsilon/(6(m+1))$ for some $\varepsilon\in (0,1)$. Then for any $\xi\in \Q$ such that $\xi T(\xi)\neq 0$, we have 
$$
R_k(\xi) = \mathcal{O}(n^{-(m-1-\varepsilon/2)n}), \quad k\le m+t_1,
$$
and
$$
P_{k,i}(\xi)=\mathcal{O}(n^{(1+\varepsilon/2)n}), \quad k\le m+t_1, \; i=1, \ldots, m.
$$
From these estimates, Shidlovskii deduces \cite[p.~357,~Theorem~1]{shid}, ie \eqref{eq:mesindepshid} for $E$-functions in Siegel's sense.

With Shidlovskii's original method, the constant $c$ in \eqref{eq:mesindepshid} was ineffective because of the ineffectivity of the integer $n_0$. As we have explained before, this is no longer the case. 
For our purpose, we need a different version of the measure \eqref{eq:mesindepshid}, with a  control of the dependencies of the constants on the parameters.

\subsection{Proof of Proposition \ref{propmino}} \label{ssec:22}

In this section we shall prove Proposition~\ref{propmino}.

In all what follows, as in \S \ref{subsecenoncemesure}, when we say that {\em a constant $c$ depends polynomially on a parameter $k$} we mean that there exists a polynomial $p$ with positive coefficients such that $\vert c\vert \le p(\vert k\vert)$. The polynomial $p$, including its degree, may depend on all other parameters $c$ depends on.

We shall follow now the sketch of proof given in the previous section; at each step we shall make all bounds explicit (to be precise, we shall make the dependencies in terms of the parameters explicit).

As in \S \ref{subsecenoncemesure} we consider a vector
 $Y={}^t(f_1, \ldots, f_m)$ of strict $E$-functions in $\mathbb Q[[z]]$, solution of a differential system $Y'=AY$ with $A\in M_m(\mathbb Q(z))$. We assume that $f_1, \ldots, f_m$ are $\mathbb Q(z)$-linearly independent, and denote by $ T\in \mathbb Z[z]\setminus\{0\}$ a common denominator of minimal degree of the entries of $A$. We fix $\xi\in \mathbb Q$ be such that $\xi T(\xi)\neq 0$. 
 As in \S \ref{ssec:21} we consider also $\varepsilon_1>0$ such that $\varepsilon_1< \frac1{2m-1}$.

\subsubsection{Construction of the polynomials} \label{subsubsec421}

Following the proof of \cite[p.~107, Lemma~14]{shid} and using Siegel's lemma, we find that for any $1\leq i \leq m $ and any $ 0\leq \nu\leq n$, 
$$
\vert b_{i,\nu}\vert \le n!2\big(m(n+1) CD\big)^{2m/\varepsilon_1}(2CD)^{4m^2 n/\varepsilon_1}
$$
and for any $\nu\geq\tau$,
 $$ 
\vert a_\nu \vert \le n!m 2^{\nu+1} 
\big(m(n+1) CD\big)^{2m/\varepsilon_1}(2CD)^{4m^2 n/\varepsilon_1} C^{\nu+1}.
 $$
Since 
$$
R(z)=\sum_{\nu=\tau}^\infty \frac{a_\nu}{\nu!}z^\nu,
$$
we deduce that 
\begin{equation*}
\vert R(z)\vert \le n!m \big(m(n+1)CD\big)^{2m/\varepsilon_1}(2CD)^{4m^2 n/\varepsilon_1} 2C \sum_{\nu=\tau}^\infty \frac{(2C\vert z\vert)^\nu}{\nu!}.
\end{equation*}
As 
$$
\sum_{\nu=\tau}^\infty \frac{t^\nu}{\nu!}=\frac{1}{(\tau-1)!}\int_0^t (t-x)^{\tau-1}e^xdx\leq \frac{ t^{\tau-1}e^t}{(\tau-1)!} \mbox{ for any }t\geq 0,
$$
it follows that 
$$
\vert R(z)\vert \le C_1(2CD)^{6m^2 n/\varepsilon_1} e^{2C\vert z\vert}
\frac{n!}{\big(m(n+1)-\lfloor \varepsilon_1 n\rfloor\big)!}(2C\vert z\vert)^{m(n+1)-\lfloor \varepsilon_1 n\rfloor-2} 
$$
where $C_1>0$ depends on $\varepsilon_1, m$, polynomially on $n$, not on $z$ and not on the Taylor coefficients of the $f_j$'s. This constant $C_1$ also satisfies 
\begin{equation} \label{eqmajoanu}
\vert a_\nu \vert \le n! \, C_1 \, 
(2CD)^{6m^2 n/\varepsilon_1} \, (2C)^{\nu+1} \mbox{ for any } \nu\geq\tau
\end{equation}
 and 
\begin{equation} \label{eqmajobknu}
\vert b_{i,\nu}\vert \le n!\, C_1 \, 
(2CD)^{6m^2 n/\varepsilon_1} \mbox{ for any }1\leq i \leq m 
\mbox{ and any } 0\leq \nu\leq n.
\end{equation}

\bigskip

\subsubsection{Upper bounds on the linear forms} \label{subsubsec423}

By a similar analysis of the proof of \cite[p.~110, Lemma~15]{shid}, using Eq. \eqref{eqmajoanu} we see that 
$$
\vert R_k(z) \vert \le C_2 (2CD)^{6m^2n/\varepsilon_1} 
\frac{n!k!(2q)^ke^{2C\vert z\vert} (1+|z|)^{(k-1)q}}{\big(m(n+1)-\lfloor \varepsilon_1 n\rfloor-k-2\big)!} 
(2C\vert z\vert)^{m(n+1)-\lfloor \varepsilon_1 n\rfloor-k-2}
$$
for all $n\ge n_0$ and all $k\in \{1, \ldots, m+t_1\}$, where $C_2>0$ 
depends on $\varepsilon_1, m,p,q$, polynomially on $n$, linearly on 
the $k$-th power of the maximum modulus of the coefficients of the polynomial $T(z)$, and not on $z$.

Moreover, the degree of each $P_{k,i}$ is less than $n+(k-1)q$ by \cite[p.~115]{shid} and using Eq.~\eqref{eqmajobknu} we have 
$$
\vert P_{k,i}(z)\vert \le C_3 (2CD)^{6m^2 n/\varepsilon_1}(1+\vert z\vert)^{n+(k-1)q}(m+n)^k n!k!q^{k} , \quad k\le m+t_1, \; i=1, \ldots, m
$$
where $C_3$ depends on $\varepsilon, m,p,q$, polynomially on $n$ and $z$, and linearly on $E^k$. As a polynomial in $n$ and $z$, the degree of $C_3$ depends only on $\varepsilon_1, m,p,q$.

\medskip

 In the above upper bounds for $R_k$ and $P_{k,i}$, we now use the fact that $k\le m+t_1\le \lfloor \varepsilon_1 n\rfloor+C_4$, where $C_4$ depends only on $m$, $p$, $q$ (by Eq.~\eqref{eq:defintiont}), but not on $n$ or $z$. As in \cite{shid} we take $z=\xi$ and multiply by a common denominator.
 After some simplifications, 
the situation can now be summarized as follows (this makes explicit \cite[p.~114,~Lem\-ma~16]{shid}): for every $\xi\in \mathbb Q$ such that $\xi T(\xi)\neq 0$ and for all $n\ge n_0$, there exist $m$ linearly independent linear forms 
$$
L_j:=\sum_{i=1}^m a_{j,i} f_i(\xi), \quad a_{j,i}\in \mathbb Z, \quad j=1, \ldots, m,
$$
(that depend on $n$) such that for any $1\leq j \leq m$,
\begin{multline} \label{eq:majLk}
\quad \vert L_j\vert \le C_5 (2CD)^{6m^2 n/\varepsilon_1}{e^{2C\vert \xi\vert}}(2q\max(\varepsilon_1,C_4))^{\varepsilon_1 n}\\
\times 
 (1+|\xi|)^{q\varepsilon_1n} \textup{den}(\xi) ^{(1+q\varepsilon_1)n} \,\frac{n^{n+\varepsilon_1 n}(2C\vert \xi\vert)^{mn-2\varepsilon_1 n}}
{(mn-2\lfloor \varepsilon_1 n\rfloor)!}, 
\end{multline}
where the factor $C_5 $ depends on $\varepsilon_1, m,p,q, \xi$, polynomially in $n$ and linearly on the $(m+t_1)$-th power of the maximum modulus of the coefficients of the polynomial $T(z)$. Since this exponent is $\le \varepsilon_1 n+C_4$ we have 
$C_5\le \widetilde{c_5}^n$ where $\widetilde{c_5}$ depends on $\varepsilon_1, m,p,q, \xi, T(z)$ but not on $n$. 
Moreover, for any $1\leq i,j\leq m$, 
\begin{equation}\label{eq:majaki}
\quad \vert a_{j,i}\vert \le C_6 (2CD)^{6m^2 n/\varepsilon_1}(2q\max(\varepsilon_1, C_4))^{\varepsilon_1 n} 
\big((1+|\xi|)\textup{den}(\xi)\big)^{(1+q\varepsilon_1)n}
n^{n+2\varepsilon_1 n}, 
\end{equation}
where $C_6 $ depends on $\varepsilon_1, m, p, q, \xi$, 
polynomially in $n$ and linearly in $E^{m+t_1}$. 
As a polynomial in $n$, the degree of $C_6$ depends only on $\varepsilon_1, m, p, q$.

\subsubsection{Conclusion} \label{ssec:effecfinal}

We are now ready to analyze the proof of \cite[p.~357,~Theorem~1]{shid} in the case $\mathbb K=\mathbb Q$ in order to make Shidlovskii's measure explicit. Shidlovskii proves that for any $\varepsilon\in (0,1)$, any integer $H\ge 1$ and any vector $(a_1, \ldots, a_m)\in \mathbb Z^m\setminus \{0\}$ such that $\max\vert a_i\vert \le H$, we have
$$
\bigg\vert\sum_{i=1}^m a_i f_i(\xi) \bigg\vert \ge b_4 n^{-(m-1+\varepsilon)n}\big(1-b_5 Hn^{-(1-\varepsilon)n}\big),
$$
where $b_4,b_5>0$ are not computed but are known to be independent of $H$. He then chooses the smallest $n$ such that $n\ge n_0$ and $n^{(1-\varepsilon)n}>2b_5H$ (such an $n$ obviously exists) to deduce the expected linear independence measure: 
$$
\bigg\vert\sum_{i=1}^m a_i f_i(\xi) \bigg\vert \ge \frac{b_7}{H^{m-1+2m\varepsilon}}
$$
for a constant $b_7>0$ which is again not computed but is known to be independent of $H$. 

\medskip

In the proof of \cite[p.~357,~Theorem~1]{shid} in the case $\mathbb K=\mathbb Q$, we have $i=h=1$. Shidlovskii defines the determinant $\Delta$ whose entries are the coefficients (in $\mathbb Z$) of the $m$ linear forms $L_1, \ldots, L_{m-1}, L_0:=\sum_{i=1}^m a_if_i(\xi)$ (which up to reordering can be assumed to be linearly independent without loss of generality when $n\ge n_0$), and the determinant $\Delta_{j,\ell}$ which is the cofactor of the entry in the $j$-th row and $\ell$-column of $\Delta$. (Each line corresponds to a linear form, with $L_1$ at the top and $L_0$ at the bottom of the determinant.) 
In \cite[p.~358, Eq.~(41)]{shid}, each occurence of $i(=1)$ can be deleted and we have
\begin{equation}\label{eq:lowerbound1}
 |\Delta_{m,\ell}|\cdot \vert L_0 \vert
\ge \vert f_{\ell}(\xi)\vert \cdot \vert \Delta\vert-(m-1)\max_{1\le j\le m-1} |\Delta_{j,\ell} |\cdot \max_{1\le j\le m-1} \vert L_j\vert.
\end{equation}
This inequality holds for any $\ell \in\{1, \ldots, m\}$ such that $f_\ell(\xi)\neq 0$; such an $\ell$ exists because $\xi T(\xi)\neq 0$. We have $\vert \Delta\vert \ge 1$ when $n\ge n_0$. 
Then using \eqref{eq:majLk} and \eqref{eq:majaki}, for any $n\ge n_0$, we can replace the three bounds in \cite[p.~359, Eq.~(42)]{shid} by 
\begin{multline*}
\max_{1\le j\le m-1} \vert L_{j}\vert \le 
C_5 (2CD)^{6m^2 n/\varepsilon_1}{e^{2C\vert \xi\vert}}(2q\max(\varepsilon_1,C_4))^{\varepsilon_1 n}
\\
\times 
(1+|\xi|)^{q\varepsilon_1 n}\textup{den}(\xi)^{(1+q\varepsilon_1)n} \,\frac{n^{n+\varepsilon_1 n}(2C\vert \xi\vert)^{mn-2\varepsilon_1 n}}
{(mn-2\lfloor \varepsilon_1 n\rfloor)!},
\end{multline*}
$$
\housesurQ{\Delta_{m,\ell}} \le C_7 \Big((2CD)^{6m^2 n/\varepsilon_1}(2q\max(\varepsilon_1, C_4))^{\varepsilon_1 n} 
\big((1+|\xi|)\textup{den}(\xi)\big)^{(1+q\varepsilon_1)n}
n^{n+2\varepsilon_1 n}
\Big)^{m-1}
$$
and 
$$
\max_{1\le j\le m-1}\housesurQ{\Delta_{j,\ell}} \le C_8 H \Big(
(2CD)^{6m^2 n/\varepsilon_1}(2q\max(\varepsilon_1, C_4))^{\varepsilon_1 n} 
\big((1+|\xi|)\textup{den}(\xi)\big)^{(1+q\varepsilon_1)n}
n^{n+2\varepsilon_1 n}
\Big)^{m-2},
$$
where $C_7, C_8>0$ have the same dependencies as $C_6^m$, where $C_6$ is considered in \eqref{eq:majaki}, and both can be bounded accordingly. We recall from  \S \ref{subsubsec423} that:
\begin{itemize}
\item[$\bullet$] $C_4$ depends only on $m$, $p$, $q$, but not on $n$, $\xi$.
\item[$\bullet$] $C_5\le \widetilde{c_5}^{n}$ where $\widetilde{c}_5$ depends on $\varepsilon_1, m,p,q, \xi, T(z)$ but not on $n$.
\item[$\bullet$] $C_6\le \widetilde{c_6}^{n} $ where $\widetilde{c_6}$ depends on $\varepsilon_1, m, p, q, \xi$, and polynomially on $E$, but is independent of $n$. All polynomials involved here have degrees bounded in terms of $\varepsilon_1, m, p, q$.
\end{itemize}

Now the three bounds obtained above yield, for any $n\ge n_0$,
$$
\max_{1\le j\le m-1} \vert L_{j}\vert \le C_9 {e^{2C\vert \xi\vert}} n^{C_{10}} C_{11}^n n^{-n(m-1- 3\varepsilon_1)}
$$
$$
\housesurQ{\Delta_{m,\ell}} \le C_9 n^{C_{10}} C_{11}^n 
n^{ n(m-1)(1+2\varepsilon_1)}, 
$$
and
$$
\max_{1\le j\le m-1}\housesurQ{\Delta_{j,\ell}} \le C_9 H n^{C_{10}} C_{11}^n n^{ n(m-2)(1+2\varepsilon_1)},
$$
where :
\begin{enumerate}
\item[$\bullet$] $C_9\ge 1$ depends on $\varepsilon_1, m, p, q, \xi$, polynomially on $C$ and $D$ but not on $n$. 
\item[$\bullet$] $C_{10}\ge 0$ depends on $\varepsilon_1, m, p, q, \xi $, not on $n$ and not on the Taylor coefficients of the $f_j$'s. 
\item[$\bullet$] $C_{11}\ge 1$ depends on $\varepsilon_1, m, p, q, \xi$, $T(z)$, polynomially on $C$, $D$ and $E$ but not on $n$. 
\item[$\bullet$] The degree of $C_9$ as a polynomial in $C$ and $D$, and the one of $C_{11}$ in $C$, $D$ and $E$, are bounded in terms of $\varepsilon_1, m, p, q$. 
\end{enumerate} 

Now we choose $\ell$ such that 
$$ \vert f_\ell(\xi)\vert = \max_{1\leq j \leq m}|f_j(\xi)| \geq \kappa$$
and we deduce from \eqref{eq:lowerbound1} that, for $n\ge n_0$,
\begin{equation}\label{eq:lowerbound2}
\vert L_0 \vert \ge \frac{\kappa}{C_9 n^{C_{10}}C_{11}^n 
n^{ n(m-1)(1+2\varepsilon_1)}
}
\left(1-un^v w^n n^{-\delta n} H
\right)
\end{equation}
where 
$$u:=\max\big(1,(m-1)C_9^2{e^{2C\vert \xi\vert}}/ \kappa \big),
\quad v:=2C_{10}, \quad w:=C_{11}^2,$$ 
and $ \delta:=1-(2m-1)\varepsilon_1 \in (0,1)$
by the assumption made on $\varepsilon_1$ at the beginning of \S\ref{ssec:22}. 
This implies
$$
\vert L_0 \vert \ge \frac{\kappa}{2C_9 n^{C_{10}}C_{11}^n 
n^{ n(m-1)(1+2\varepsilon_1)}}
$$
provided 
\begin{equation}\label{eq:lowerbound3}
n \ge n_0 \quad \textup{and} \quad 2uH\le w^{-n} n^{\delta n-v}.
\end{equation}

Since $u\geq 1$ and we assume
$H\geq \max(3,n_0^{n_0})$ in Proposition~\ref{propmino}, we have 
 $2uH \geq n_0^{n_0}$. Accordingly for any $n<n_0$ we have $2uH>n^n \geq w^{-n}n^{\delta n-v}$, so that the minimal value of $n$ (denoted by $N$ from now on) that satisfies $2uH\le w^{-n} n^{\delta n-v}$ is automatically $\ge n_0$: it satisfies the assumptions \eqref{eq:lowerbound3}.
 
We want to find an upper bound for $N$ in terms of $2uH,w,v,\delta$. An equivalent definition of $N$ is that it is the largest integer such that
\begin{equation} \label{eq:definitionN}
 (N-1)^{N-1}<(2u H)^{1/\delta}(w^{1/\delta})^{N-1}(N-1)^{v/\delta}.
\end{equation}
For any $X\ge 1$ we have the following lower bounds:
\begin{align*}
\frac{X^X}{(2uH)^{1/\delta} w^{X/\delta} X^{v/\delta}}
 &\geq 
 \frac{X^X}{(2uH)^{1/\delta} X^{X/4}X^{v/\delta}} \quad \textup{assuming that} \; X\ge w^{4/\delta}, 
 \\
 &= \frac{X^{3X/4}}{(2uH)^{1/\delta} X^{v/\delta}}
 \\
 &\ge \frac{X^{X/4}}{(2uH)^{1/\delta}} \quad \textup{assuming that} \; X\ge \frac{2v}{\delta}
 \\
 &=\left(\frac{X^X}{(2uH)^{4/\delta}}\right)^{1/4}
 \\
 &\ge 1 \quad \textup{assuming that} \; X\ge \frac{8\ln(2uH)}{\delta \ln\ln((2uH)^ {4/\delta})},
\end{align*}
where in the last line we use 
the elementary fact that for any $y>e$, if $x\ge \frac{2\ln(y)}{\ln\ln(y)}$, then $x^x\ge y$. Note that $H\geq 3$ and $u\geq 1$ so that $(2uH)^{4/\delta}\ge 6^{4/\delta}>e$ because $\delta\in (0,1)$ and thus we can use this fact with $y:=(2uH)^{4/\delta}$. All   assumptions in the lower bound above are satisfied for instance for any 
$$
X\geq 1+w^{4/\delta}+\frac{2v}{\delta}+\frac{8\ln(2uH)}{\delta\ln\ln((2uH)^{4/\delta})}. 
$$
Since for $X=N-1$ the lower bound $\frac{X^X}{(2uH)^{1/\delta} w^{X/\delta} X^{v/\delta}}\geq 1$ does not hold, we deduce that
\begin{equation}\label{eq:majorationN}
N\le 1+w^{4/\delta}+\frac{2v}{\delta}+\frac{8\ln(2uH)}{\delta\ln\ln((2uH)^ {4/\delta})}=:\Phi.
\end{equation}
Hence, since \eqref{eq:lowerbound3} holds  with $n:=N$, we have
\begin{align*}
\vert L_0 \vert &\ge \frac{\kappa}{2C_9 N^{C_{10}}C_{11}^N 
N^{ N(m-1)(1+2\varepsilon_1)}} 
\\ 
&=\frac{\kappa}{2C_9 e^{C_{10}\ln(N)+ \ln(C_{11})N +
 (m-1)(1+2\varepsilon_1)N\ln(N)}}
\\
&\geq \frac{1}{H^c} 
\end{align*}
because 
$$
N\ln(N)\leq \Phi \ln(\Phi)\leq C_{12} \ln(H) \mbox{ and } H\geq 3,
$$
where $c $ and $C_{12} $ depend on $\varepsilon_1, m, p, q, \xi, \kappa$, $T(z)$, and polynomially on $C$, $D$ and $E$. The degree of these polynomials are bounded in terms of $\varepsilon_1, m, p, q$. To conclude the proof of Proposition~\ref{propmino}, we choose $\varepsilon_1=\frac1{2m}$.

\section{Proof of Theorem~\ref{theo:2}} \label{sec:3}

\subsection{First reductions} \label{subsecreductions}

Let $f\in \mathbb Q[[z]]$ be a strict $E$-function, and 
 $\xi\in \mathbb Q^*$ be such that $f(\xi)>0$. Considering $f(\xi z) $ instead of $f$, we may assume that $\xi=1$. Recall  that $f(1)\neq 1$,  because $\ln(f(1))\notin \mathbb Q$ is an assumption of Theorem \ref{theo:2}. Accordingly, if $f(1)$ is algebraic then $\ln(f(1))$ is not a Liouville number by \cite[p.~386, Theorem~3]{mahler2}, and the conclusion of Theorem~\ref{theo:2} follows at once. Therefore we may assume that $f(1)$ is transcendental and apply the following consequence of \cite[Proposition~2]{muegaledeux}, which is a variant of Beukers' desingularization
 lemma \cite[Theorem~1.5]{beukers}.
 
\begin{prop} \label{propdesing}
Let $g_1$, \ldots, $g_k$ be $E$-functions with rational coefficients, such that $1$, $g_1$, \ldots, $g_k$ are linearly independent over $\Q(z)$ and $g_1(1)$ is transcendental. Assume also that the vector $\tra (1, g_1, \ldots, g_k)$ is solution of a differential system $Y'=AY$ with $A\in M_{k+1}(\Q(z))$.

Then there exist $E$-functions $f_1, \ldots, f_k$ with rational coefficients, such that $1, f_1, \ldots, f_k$ are linearly independent over $\Q(z)$, $f_1(1)=g_1(1)$, and the vector $\tra (1, f_1, \ldots, f_k)$ is solution of a differential system $Y'=BY$ with $B\in M_{k+1}(\Q[z, \frac1z])$. 

Moreover, when the $g_j$'s are strict $E$-functions, then the $f_j$'s are strict $E$-functions as well.
\end{prop}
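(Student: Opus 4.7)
The plan is to invoke directly Proposition~2 of \cite{muegaledeux}---the cited variant of Beukers' desingularization lemma---and then apply a constant-matrix adjustment to secure the value condition $f_1(1)=g_1(1)$. First I apply that result to $\tra(1,g_1,\ldots,g_k)$: it yields $E$-functions $\widetilde f_1,\ldots,\widetilde f_k\in\Q[[z]]$ (strict if the $g_j$'s are) such that $1,\widetilde f_1,\ldots,\widetilde f_k$ are linearly independent over $\Q(z)$ and the vector $\tra(1,\widetilde f_1,\ldots,\widetilde f_k)$ satisfies a system $Y'=\widetilde BY$ with $\widetilde B\in M_{k+1}(\Q[z,1/z])$. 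The underlying gauge transformation is a matrix $P_0\in\GL_{k+1}(\Q(z))$ with first row $(1,0,\ldots,0)$, and---crucially---both $P_0$ and $P_0^{-1}$ have entries regular at every nonzero finite algebraic point that gets desingularized, in particular at $z=1$.

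Next I write $g_1=c_0(z)+\sum_{i=1}^k c_i(z)\widetilde f_i(z)$ with $c_i\in\Q(z)$ (read off from the second row of $P_0^{-1}$). The regularity of $P_0^{-1}$ at $z=1$ ensures the $c_i$'s are regular there, giving
$$g_1(1)=c_0(1)+\sum_{i=1}^k c_i(1)\widetilde f_i(1).$$
Transcendence of $g_1(1)$ forces at least one $c_i(1)$ with $i\ge 1$ to be nonzero, so the row $(c_0(1),c_1(1),\ldots,c_k(1))\in\Q^{k+1}$ is linearly independent from $(1,0,\ldots,0)$. Completing these two rows arbitrarily to a matrix $P\in\GL_{k+1}(\Q)$ (with first row $(1,0,\ldots,0)$ and second row $(c_0(1),\ldots,c_k(1))$) and setting $\tra(1,f_1,\ldots,f_k):=P\,\tra(1,\widetilde f_1,\ldots,\widetilde f_k)$ gives $f_1(1)=g_1(1)$ by construction.

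To finish, since $P$ is a \emph{constant} matrix the new system matrix is $B=P\widetilde BP^{-1}\in M_{k+1}(\Q[z,1/z])$; linear independence of $1,f_1,\ldots,f_k$ over $\Q(z)$ is immediate from invertibility of $P$; and each $f_j$ is a $\Q$-linear combination of the constant $1$ and the $\widetilde f_i$, hence an $E$-function (strict whenever the $g_j$'s are). The main obstacle---and the reason the cited Proposition~2 of \cite{muegaledeux} is needed rather than the bare Beukers lemma---is guaranteeing the regularity of both $P_0$ and $P_0^{-1}$ at $z=1$, so that the coefficients $c_i$ can be evaluated there and the transcendence of $g_1(1)$ can be converted into the required nonvanishing on the second row of $P$.
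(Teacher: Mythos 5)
Your derivation is correct and matches the paper's treatment: the paper states this proposition without proof, as a direct consequence of \cite[Proposition~2]{muegaledeux}, and your argument --- apply the cited desingularization to get $\tra(1,\widetilde f_1,\ldots,\widetilde f_k)$, then conjugate by a constant matrix in $\GL_{k+1}(\Q)$ with first row $(1,0,\ldots,0)$ and second row $(c_0(1),\ldots,c_k(1))$, using the transcendence of $g_1(1)$ to guarantee some $c_i(1)\neq 0$ with $i\geq 1$ --- is the natural way to fill in that deduction, and it correctly preserves linear independence, the $\Q[z,\frac1z]$ system, and strictness. One minor remark: you only need the coefficients expressing $g_1$ in terms of $1,\widetilde f_1,\ldots,\widetilde f_k$ to be regular at $z=1$ (in Beukers-type desingularization they are polynomials), so your stronger claim that $P_0$ itself is also regular there is superfluous and not obviously guaranteed by the cited statement.
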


To apply Proposition~\ref{propdesing}, we consider the inhomogeneous differential equation of minimal order satisfied by $f$. We denote its order by $\mu-1$, with $\mu\geq 2$ because $f$ is transcendental (since $f(1)$ is).
The functions $1, f, f', \ldots, f^{(\mu-2)}$ are linearly independent over $\Q(z)$: Proposition~\ref{propdesing} provides $E$-functions $f_1$, \ldots, $f_{\mu-1}$ with rational coefficients such that $f_1(1)=f(1)$ is the number we are interested in the logarithm of. Letting $f_\mu=1$, the functions $f_1$, \ldots, $f_\mu$ are linearly independent over $\Q(z)$ and make up a vector solution of a differential system without non-zero singularity; in particular 1 is not a singularity.

Now recall that the functions $\exp(\beta z)$, $\beta\in\Qbar$, are linearly independent over $\Q(z)$. Therefore at most $\mu$ of them belong to the $\Q(z)$-vector space spanned by $f_1$, \ldots, $f_\mu$. Since $\ln(f(1))$ is irrational, we may exclude these finitely many values of $\beta=a/b $ in proving the lower bound \eqref{eq:irratmeasurelogE} (up to changing the values of $c$ and $d$). Therefore we may restrict to rationals $\beta$ such that $f_1$, \ldots, $f_\mu$, and $\exp(\beta z)$ are linearly independent over $\Q(z)$.

\subsection{Application of the effective linear independence measure} \label{ssec:23}

As explained in the previous section, we are trying to bound 
$$ \left\vert \ln(f_1(1))-\frac{a}{b}\right\vert$$
from below, with $a\in\Z$ and $b\in \N^*$.
By the mean value theorem (see Eq. \eqref{eq:minlogf} below), it is essentially equivalent to bounding below 
$$ \left\vert f_1(1)-\exp( a/b )\right\vert,$$
which is a $\Z$-linear combination of the values at $1$ of the $E$-functions $f_1(z)$ and $\exp(\beta z)$, with $\beta := a/b$. We point out that the coefficients of this linear combination are only $0$, $1$ and $-1$ whereas in general, we are always interested in linear combinations with large coefficients.

As explained at the end of 
\S \ref{subsecreductions}, we may assume that 
$f_1, \ldots, f_\mu$ and $f_{\mu+1}(z)=\exp(\beta z)$ are strict $E$-functions linearly independent over $\Q(z)$, solution of a first-order differential system without non-zero singularity. Moreover $f_1, \ldots, f_\mu$ are solution of a first-order differential system without non-zero singularity, independent from $\beta$: we are in the setting of \eqref{eqpartic}. 

Corollary~\ref{coromino} yields a constant $c$, which depends on $f_1, \ldots, f_{m-1}$ and polynomially on $|\beta|$ and $\textup{den}(\beta)$, such that with $H= \max(3,n_0^{n_0})$ we have
$$
 \left\vert f_1(1)-\exp( a/b )\right\vert > \frac1{H^c}.
$$
An important feature of this corollary is that this value of $H$ depends only on $f_1, \ldots, f_{m-1}$, not on $\beta$.
 
If $\vert f_1(1)-\exp(a/b)\vert \geq f_1(1)/2 $, the lower bound of Theorem~\ref{theo:2} holds trivially. Therefore we may assume that 
\begin{equation}\label{eq:mineta}
\big\vert f_1(1)-\exp(a/b) \big\vert < 
f_1(1)/2.
\end{equation}
Accordingly $|\beta|$ is bounded in terms of $f_1(1)$ only, and there exists a polynomial $Q\in\R[X]$ with positive coefficients such that $c\leq Q(b)$; this polynomial $Q$ depends only on $f_1,\ldots,f_{m-1}$. Therefore
$$
 \left\vert f_1(1)-\exp( a/b )\right\vert > \frac1{H^{Q(b)}}
$$
and this concludes the proof of Theorem~\ref{theo:2}. Indeed by the mean value theorem, for all $(a,b)\in \mathbb Z^*\times \mathbb N^*$, $a,b$ coprime, there exists $\omega>0$ in the interval with endpoints $\exp(a/b)$ and $f_1(1)$ such that 
\begin{equation}
\label{eq:minlogf}
\left\vert \ln(f_1(1))-\frac{a}{b} \right\vert =\frac{1}{\omega} \left\vert f_1(1)-\exp(a/b)\right\vert, 
\end{equation}
and finally in this equality, the coefficient $\omega$ can be bounded above in terms of $f_1(1)$ only due to Eq. \eqref{eq:mineta}.

\noindent St\'ephane Fischler, Universit\'e Paris-Saclay, CNRS, Laboratoire de math\'ematiques d'Orsay, 91405 Orsay, France.

\medskip

\noindent Tanguy Rivoal, Universit\'e Grenoble Alpes, CNRS, Institut Fourier, CS 40700, 38058 Grenoble cedex 9, France.

\bigskip

\noindent Keywords: $E$-functions, Irrationality measure, Shidlovskii's linear indepence measure

\bigskip

\noindent MSC 2020: 11J82 (Primary), 11J91 (Secondary)


\begin{thebibliography}{10}

\bibitem{ar} B. Adamczewski, T. Rivoal, Exceptional values of $E$-functions at algebraic points, {\em Bull. Lond. Math. Soc.} {\bf 50}.4 (2018), 697--708.

\bibitem{andre} Y. Andr\'e, S\'eries Gevrey de type arithm\'etique I. Th\'eor\`emes de puret\'e et de dualit\'e, {\em Ann. of Math.} {\bf 151}.2 (2000), 705--740.

\bibitem{andre3} Y. Andr\'e, S\'eries Gevrey de type arithm\'etique II. Transcendance sans transcendance, {\em Ann. of Math.} {\bf 151}.2 (2000), 741--756.

\bibitem{andre2} Y. Andr\'e, Alg\`ebres de solutions d'\'equations diﬀ\'erentielles et vari\'et\'es quasi-ho\-mo\-g\`enes~: une nouvelle correspondance de Galois diﬀ\'erentielle, {\em Ann. Sci. \'Ecole Norm. Sup.} {\bf 47}.2 (2014), 449--467.
 
\bibitem{bb} D.~Bertrand, F.~Beukers, {\'{E}quations diff\'erentielles lin\'eaires et majorations de multiplicit\'es}, {\em Ann. Sci. \'Ecole Norm. Sup.} {\bf 18}.1 (1985), 181--192.

\bibitem{db} D. Bertrand, Exposants des systèmes différentiels, vecteurs cycliques et majorations de multiplicités, {\'Equations différentielles dans le champ complexe}, Vol. I (Strasbourg, 1985), {\em Publ. Inst. Rech. Math. Av.}, Univ. Louis Pasteur, Strasbourg, 1988, pp.~61--85.

\bibitem{bcy} D.~Bertrand, V. Chirskii, J. Yebbou, Effective estimates for global relations on Euler-type series, {\em Annales de la faculté des sciences de Toulouse} {\bf 13}.2 (2004), 241--260.

\bibitem{bbh} F. Beukers, W. Dale Brownawell, G. Heckman, Siegel normality, {\em Ann. Math.} {\bf 127}.2 (1988), 279--308. 
 
\bibitem{beukers} F.~Beukers, {A refined version of the Siegel-Shidlovskii theorem}, {\em Ann. of Math.} {\bf 163}.1 (2006), 369--379. 

\bibitem{brs} A. Bostan, T. Rivoal, B. Salvy, Minimization of differential equations and algebraic values of E-functions, {\em Math. Comp.} {\bf 93} (2024), 1427--1472.

\bibitem{brownawell} W. D. Brownawell, Effectivity in independence measures for values of $E$-functions, {\em J. Aust. Math. Soc. Ser. A} {\bf 39} (1985), 227--240. 

\bibitem{Delaygue} E. Delaygue, A Lindemann-Weierstrass theorem for $E$-functions, preprint arXiv 2210.12046, 8 pages, submitted.

\bibitem{muegaledeux} S. Fischler, T. Rivoal, Rational approximations to values of $E$-functions, preprint arXiv 2312.12043, 32 pages, submitted.

\bibitem{lepetit} G. Lepetit, Le th\'eor\`eme d'Andr\'e-Chudnovsky-Katz au sens large, {\em North-West. Eur. J. Math.} {\bf 7} (2021), 83--149.

\bibitem{mahler} K. Mahler, Zur Approximation der Exponentialfunktion und des Logarithmus. Teil I, {\em J. reine angew. Math.} {\bf 166} (1931), 118--136.

\bibitem{mahler2} K. Mahler, On the approximation of logarithms of algebraic numbers,
{\em Philos. Trans. Roy. Soc. London Ser. A} {\bf 245} (1953), 371--398.

\bibitem{MM} D. Marques, C. Moreira, On a variant of a question proposed by K. Mahler concerning Liouville numbers, {\em Bull. Aust. Math. Soc.} {\bf 91} (2015), 29--33.

\bibitem{shid} A.~B. Shidlovskii, {\em Transcendental numbers}, de Gruyter Studies in Math. {\bf 12}, de Gruyter, Berlin, 1989.

\bibitem{siegel} C. Siegel, \"Uber einige Anwendungen diophantischer Approximationen, vol. 1 S. {\em Abhandlungen Akad.}, Berlin, 1929.


\end{thebibliography}
\end{document}